\documentclass[a4paper,12pt,reqno]{amsart}
\usepackage{amsmath,amssymb,amsthm}
\usepackage{mathrsfs}
\usepackage{graphics}
\usepackage{graphicx,color} 
\usepackage{epsfig}
\theoremstyle{plain}
\newtheorem{thm}{Theorem}[section]
\newtheorem{corollary}[thm]{Corollary}
\newtheorem{lemma}[thm]{Lemma}
\newtheorem{proposition}[thm]{Proposition}

\theoremstyle{definition}
\newtheorem{defn}{Definition}[section]

\theoremstyle{remark}

\numberwithin{equation}{section}
\linespread{1.25}
\setlength{\paperwidth}{210mm}
\setlength{\paperheight}{297mm}
\setlength{\oddsidemargin}{0mm}
\setlength{\evensidemargin}{0mm}
\setlength{\topmargin}{-20mm}
\setlength{\headheight}{10mm}
\setlength{\headsep}{13mm}
\setlength{\textwidth}{160mm}
\setlength{\textheight}{240mm}
\setlength{\footskip}{15mm}
\setlength{\marginparwidth}{0mm}
\setlength{\marginparsep}{0mm}
\begin{document}
\begin{center}
\title{Variable Lebesgue algebra on a Locally Compact group }
\maketitle
Parthapratim Saha$^{a}$ and Bipan Hazarika$^{b,}$\footnote{The corresponding author}

\vspace{.2cm}
$^{a}$Department of Mathematics, Sipajhar College, Sipajhar,  Darrang-784145, Assam, India\\
Email:  pps\_gu$@$gauhati.ac.in; parthasaha.sipclg$@$gmail.com

\vspace{.2cm}

$^{b}$Department of Mathematics, Gauhati University, Guwahati-781014, Assam, India\\

 Email:  bh\_rgu$@$yahoo.co.in; bh\_gu$@$gauhati.ac.in
\end{center}
\vspace{.5cm}
\title{}
\author{}

\begin{abstract}
For a locally compact group $H$ with a left Haar measure, we study  variable Lebesgue algebra $\mathcal{L}^{p(\cdot)}(H)$ with respect to a  convolution. We show that if $\mathcal{L}^{p(\cdot)}(H)$ has bounded exponent, then it contains a left approximate identity. We also prove a necessary and sufficient condition for $\mathcal{L}^{p(\cdot)}(H)$ to have an identity. We observe that a closed linear subspace  of $\mathcal{L}^{p(\cdot)}(H)$ is a left ideal if and only if it is left translation invariant.\\
{\bf 2020 Mathematics subject classification:} 43A10, 43A15, 43A75, 43A77.\\
{\bf Keywords:} Variable Lebesgue space, Bounded exponent, Approximate identity, Haar measure.
 \end{abstract} 

\section{\textbf{Introduction}}

The $L^p$-spaces on locally compact group have very essential role in Harmonic analysis. It has numerous applications in Mathematics, Physics, Electrical Engineering and other branches. 
The space $L^p(H)$, where $H$ a locally compact group and $1 \leq p < \infty$,  is a Banach space and even a Banach algebra with respect to a convolution if $H$ is compact  \cite{Saeki}. By replacing the constant exponent $p$ with a variable exponent including minimal restriction, the many results on  classical Lebesgue space holds there. The theory of variable exponent space was initially introduced by Orlicz in 1930's, these theories was further studied and analyzed by various authors. However, many interesting results  were found in variable exponent space and a spike of interest was  found among the authors in the recent years. 

The variable Lebesgue space is a generalization of the  $L^p$-spaces for constant exponent. It was first put forward by the Russian mathematician Tsennov \cite{Tsenov} and further expanded by Sharapudinov \cite{Sharapudinov1, Sharapudinov2, Sharapudinov3, Sharapudinov4} and Zhikov \cite{Zhikov1,Zhikov2,Zhikov3,Zhikov4,Zhikov5,Zhikov6}. In 1991, Kov\'{a}\v{c}ik and J. R\'{a}kosn\'{i}k studied some fundamental properties of the variable Lebesgue space in \cite{OJ}, which is considered as one of the foundational paper in this topic. Orlicz space is a particular case of variable Lebesgue space in some extent. In \cite{Peter}, the author established that the variable Lebesgue space is a Banach space. He also proved  some elementary results of classical Lebesgue space in variable Lebesgue space. Motivated by the above literature, we investigate that Lebesgue space of variable type,  forms a Banach algebra with respect to a suitable convolution. We organize the article  in the following way:

Some preliminary results and definition of variable Lebesgue space are furnished in Section 2. In Section 3, we provide the condition under which the variable Lebesgue space on a locally compact group is a Banach algebra with respect to a suitable convolution, which is mentioned as variable Lebesgue algebra. The left approximate identity variable Lebesgue algebra $\mathcal{L}^{p(\cdot)}(H)$ prove in Section 4. 
 We also prove   a necessary and sufficient condition for the existence of an identity   in the variable Lebesgue algebra.  A familiar result on the weighted Orlicz algebra  $L^{\phi}_w (H)$ (see \cite{AlenSerap}) that the closed left ideal of variable Lebesgue algebra $\mathcal{L}^{p(\cdot)}(H)$ is nothing else the  left translation invariant subspace of $\mathcal{L}^{p(\cdot)}(H)$ is also proved.

\section{\textbf{Preliminaries}}
Here we consider $H$ a locally compact group with a Haar measure $\mu$. We recall the following definitions and results: 
 \begin{defn}
\cite{Chon}  A non zero regular Borel measure measure $\mu$ on a locally compact group $H$ is called a left Haar measure if it is left invariant under left translation i.e., $\mu(Sg) =\mu((S)$ for all $g \in H $ and all Borel subset $S \subset H$.

In the similar way, one can define right Haar measure.
  \end{defn}
The set of all $\mu$-measurable functions from $H$ into $\mathbb{C}$ or into the expended real numbers $[-\infty , \infty]$ is denoted by $\mathcal{M}$. An element of $\mathcal{M}$ is called an exponent if it takes the values in $[1, \infty]$. We denote the  set of all exponents  by $\mathcal{E}$.   For a fix $p \in \mathcal{E}$ and any $f \in \mathcal{L}^{p(\cdot)}(H)$, the functional $ \rho_p$ defined by 
\begin{equation}\label{e1}
\rho_p(f) := \int_{F_p} |f(x)|^{p(x)}d\mu(x) + \operatorname{ess}\sup\limits_{x\in F_p^c} |f(x)|,
\end{equation}
where $F_p= \{ x \in H : p(x) < \infty\}$, is called the $p$-modular function. It ws first introduced by Kov\'{a}\v{c}ik and R\'{a}kosn\'{i}k    \cite{OJ}. There are two other definitions                                                                                                                                                                                                                                                                                                                                                                                                                                                                                                                                                                                                                                                                                                                                                                                                                                                                                                                                                             of the modular to define variable Lebesgue space. One of them is defined as 
\begin{equation}\label{e2}
\rho_p :=\max\left( \int_{F_p} |f(x)|^{p(x)}d\mu(x),~\operatorname{ess}\sup\limits_{x\in F_p^c} |f(x)|\right).
\end{equation}
It can be easily seen that the modular (\ref{e2}) is equivalent to modular given by (\ref{e1}) and they admit same norm. The modular (\ref{e2}) was introduced by Edmunds and R\'{a}kosn\'{i}k   \cite{EdmundsRakosnik}. Another approach to define a modular inspired by the theory of Musielak-Orlicz space  is given in \cite{Diening} as 
\begin{equation}
\rho_p(f)= \int_H |f(x)|^{p(x)}dx
\end{equation} 
with the convention that $t^{\infty}= \infty .\chi_{(1,\infty)}(t)$. This modular is not equivalent to (\ref{e1}), but the resulting norm is equivalent.

For a locally compact group $H$ and $p \in \mathcal{E}$, the variable Lebesgue space $\mathcal{L}^{p(\cdot)}(H)$ is defined as (see \cite{Peter})
\begin{equation*}
\mathcal{L}^{p(\cdot)}(H) = \left\{ f \in \mathcal{M} : \rho_p \left(\frac{f}{\lambda}\right) < \infty ,~\exists \lambda > 0 \right\}. 
\end{equation*}
Then the variable Lebesgue space $\mathcal{L}^{p(\cdot)}(H)$ is a Banach algebra under the norm $\|f\|_{p(\cdot)}$ defined by for all $f \in \mathcal{L}^{p(\cdot)}(H)$ 
\begin{equation*}
\|f\|_{p(\cdot)} = \inf \left\{ t >0 : \rho_p(f / t) \leq 1\right\}.
\end{equation*}
There is another norm  $\|f\|_{p(\cdot)}^A$ (Amemiya norm) on  $\mathcal{L}^{p(\cdot)}(H)$ defined as, for any $p \in \mathcal{E}$ with $p_{+} = \operatorname{ess}\sup\limits_{H}p(x) < \infty$ 
\begin{equation*}
\|f\|_{p(\cdot)}^A = \inf\left\{k > 0 : k + k \rho_p\left(\frac{f}{k}\right)\right\}.
\end{equation*}
The above two norm are equivalent and $\|f\|_{p(\cdot)} \leq \|f\|_{P(\cdot)}^A \leq 2\|f\|_{p(\cdot)}$ (for proof see \cite{S. Samko}).

 An exponent $p \in \mathcal{E}$ is called a bounded exponent if $p_{+} = \operatorname{ess}\sup\limits_H p(x) < \infty $. In \cite{Peter}, Peter showed that  $\rho_p$-convergence and $\|.\|_{p(\cdot)}$-convergence  in $\mathcal{L}^{p(\cdot)}(H)$ are equivalent, if $p$ is a bounded exponent. Denote $\mathcal{S}$  the class of simple functions that vanishes outside of a set of  finite measure, then $\mathcal{S}$ is dense in $\mathcal{L}^{p(\cdot)}$, whenever $p$  is a bounded exponent (see \cite[Theorem 1.5]{Peter}). Given any $p(\cdot) , q(\cdot) \in \mathcal{E}$ and if $\mu(H) < \infty$ then $\mathcal{L}^{q(\cdot)}(H) \subset \mathcal{L}^{p(\cdot)}(H) $ when $p(\cdot) \leq q(\cdot)$ $\mu$-almost everywhere.  Moreover, there exist constants $c_1, c_2 > 0$  such that 
\begin{equation}\label{e3}
 c_1\|f\|_{p_-} \leq \|f\|_{p(\cdot)} \leq c_2 \|f\|_{p_+}, 
\end{equation}  
where $p_{-} = \operatorname{ess}\inf\limits_H p(x)$ and $p_{+ }= \operatorname{ess}\sup\limits_H p(x)$ (see \cite[Theorem 2.26 and Corrollary 2.27]{Uribe}).

The H\"{o}lder's inequality holds in variable Lebesgue space. If $p$ and $q$ are exponent and $r$ is the function define by $\frac{1}{r(x)} = \frac{1}{p(x)} + \frac{1}{q(x)}$ is an exponent, then there exists a constant $K \in [1,5]$ such that $\|fg\|_{r(\cdot)} \leq K \|f\|_{p(\cdot)} \|g\|_{q(\cdot)}$. The conjugate exponent $p'$ of $p$ is the exponent that satisfies the equation $\frac{1}{p(x)} + \frac{1}{q(x)} =1 $ for all $x \in X $. The functional $\|\cdot\|_{p(\cdot)}^{'}$ defined on  $\mathcal{M},$ given by 
\begin{equation*}
\|f\|_{p(\cdot)}^{'} :=  \sup_{\rho_{p'}(g) \leq 1}\int_H |fg|d\mu
\end{equation*} 
is the conjugate norm to $\|\cdot\|_{p(\cdot)}$. Various other definitions used by different author for $\|\cdot\|_{p(\cdot)}^{'}$ is characterized by the following equalities \cite[Proposition 1.8]{Peter}:
\begin{align*}
\|f\|_{p(\cdot)}^{'} &= \sup_{\|g\|_{p'(\cdot)} \leq 1}\int_H |fg|d\mu = \sup_{\rho_{p'}(g) \leq 1} \left|\int_H fgd\mu \right| = \sup_{\|g\|_{p'(\cdot)} \leq 1}\left|\int_H fgd\mu\right|\\
&= \sup_{\rho_{p'}(g) \leq 1, ~g \in \mathcal{S}}\int_H |fg|d\mu = \sup_{\|g\|_{p'(\cdot)} \leq 1, ~g \in \mathcal{S}}\int_H |fg|d\mu ,
\end{align*}  
where $\mathcal{S}$ is the class of simple functions on $H$. If $\mu $ is $\sigma $-finite and $p$ is a bounded exponent, then $\mathcal{L}^{p'(\cdot)}(H)$ is the dual space of $\mathcal{L}^{p(\cdot)}(H)$, where $p'$ is the conjugate exponent to $p$. Moreover, if $p_- = \operatorname{ess}\inf\limits_H p(x) >1$, then $\mathcal{L}^{p(\cdot)}(H)$ is reflexive.

Remark that a left approximate identity in a Banach algebra $\left(A, \|\cdot\|\right)$ is a net $(e_{\alpha})_{\alpha \in \Lambda}$ in $A$ if $ \lim_{\alpha}\|e_{\alpha}x - x\| =0 $ for all $ x \in A$, similarly right approximate identity is also defined.
Now we mention some properties of the norm $\|\cdot\|_{p(\cdot)}$ on $\mathcal{L}^{p(\cdot)}(H)$ which are useful in our studies as follows:
\begin{proposition}\cite[Proposition:1.2]{Peter}
Let $p \in \mathcal{E}$ and $f, g \in \mathcal{L}^{p(\cdot)}$, then
\begin{enumerate}
\item[(i)] $\|f\|_{p(\cdot)} \geq 0 $; the equality holds iff $f=0$, $\mu$-a.e.
\item[(ii)] If $|f| \leq |g|, \mu$-a.e, then $\|f\|_{p(\cdot)} \leq \|g\|_{p(\cdot)}$.
\item[(iii)] $\rho_p\left(f/\|f\|_{p(\cdot)}\right) \leq 1$, provided $f \neq 0$ $ \mu$-a.e.
\item[(iv)] $\rho_p(f)\leq 1$  iff $\|f\|_{p(\cdot)} \leq 1$.
\item[(v)] If $\rho_p(f)\leq 1$ or $\|f\|_{p(\cdot)} \leq 1$, then $\rho_p(f) \leq \|f\|_{p(\cdot)} $.
\item[(vi)] $\rho_p(f)\geq 1$ or $\|f\|_{p(\cdot)} \geq 1$, then $\|f\|_{p(\cdot)} \leq \rho_p(f)$.
\end{enumerate}
\end{proposition}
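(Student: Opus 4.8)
The plan is to deduce all six items from three elementary properties of the $p$-modular $\rho_p$ of \eqref{e1}, plus one preliminary remark. The three properties are: \textbf{(a)} monotonicity --- $|f|\le|g|$ $\mu$-a.e.\ implies $\rho_p(f)\le\rho_p(g)$, which is clear since both the integral over $F_p$ and the essential supremum over $F_p^{c}$ are monotone; \textbf{(b)} the scaling estimate $\rho_p(\alpha f)\le\alpha\,\rho_p(f)$ for $0\le\alpha\le1$, which holds because $p(x)\ge1$ forces $\alpha^{p(x)}\le\alpha$ on $F_p$ while the essential-supremum term is positively homogeneous; and \textbf{(c)} left-continuity under dilation --- $\rho_p(f/t_n)\uparrow\rho_p(f/t)$ whenever $t_n\downarrow t>0$, which follows from the monotone convergence theorem for the term over $F_p$ and from homogeneity for the term over $F_p^{c}$. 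The preliminary remark is that $f\in\mathcal{L}^{p(\cdot)}(H)$ forces $\|f\|_{p(\cdot)}<\infty$: choosing $\lambda_0$ with $M:=\rho_p(f/\lambda_0)<\infty$ and applying \textbf{(b)} with $\alpha=1/\max(M,1)$ gives $\rho_p\bigl(f/(\lambda_0\max(M,1))\bigr)\le1$, so the infimum defining the norm runs over a nonempty set.

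Items (i) and (ii) are then quick. For (ii), $|f|\le|g|$ $\mu$-a.e.\ makes $\{t>0:\rho_p(g/t)\le1\}$ a subset of $\{t>0:\rho_p(f/t)\le1\}$ by \textbf{(a)}, hence $\|f\|_{p(\cdot)}\le\|g\|_{p(\cdot)}$. For (i), nonnegativity is automatic; if $f=0$ $\mu$-a.e.\ then every $t>0$ is admissible, so $\|f\|_{p(\cdot)}=0$; conversely, if $\|f\|_{p(\cdot)}=0$ then, by \textbf{(a)}, every $t>0$ is admissible, so $\rho_p(nf)\le1$ for all $n\in\mathbb{N}$, and Fatou's lemma applied to $n^{p(x)}|f(x)|^{p(x)}$ on $F_p$ (the integrand tends to $+\infty$ wherever $f\neq0$), together with $\operatorname{ess}\sup_{F_p^{c}}|f|\le1/n$, forces $f=0$ $\mu$-a.e.

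The substantive step is (iii). Set $\tau=\|f\|_{p(\cdot)}$, which is finite and, when $f\neq0$, positive; choose $t_n\downarrow\tau$ with $\rho_p(f/t_n)\le1$. Since $|f/t_n|\uparrow|f/\tau|$, property \textbf{(c)} gives $\rho_p(f/\tau)=\lim_n\rho_p(f/t_n)\le1$. Granted (iii), the remaining items drop out of the scaling estimate. For (iv): $\rho_p(f)\le1$ makes $t=1$ admissible, so $\|f\|_{p(\cdot)}\le1$; conversely, if $0<\|f\|_{p(\cdot)}\le1$ then (iii) and \textbf{(b)} give $\rho_p(f)=\rho_p\bigl(\|f\|_{p(\cdot)}\cdot f/\|f\|_{p(\cdot)}\bigr)\le\|f\|_{p(\cdot)}\,\rho_p\bigl(f/\|f\|_{p(\cdot)}\bigr)\le\|f\|_{p(\cdot)}\le1$, and $f=0$ is trivial. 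The same chain of inequalities, now read under the hypothesis $\|f\|_{p(\cdot)}\le1$, is precisely (v). For (vi), if $\rho_p(f)\ge1$ then \textbf{(b)} with $\alpha=1/\rho_p(f)$ shows $t=\rho_p(f)$ is admissible, so $\|f\|_{p(\cdot)}\le\rho_p(f)$; and the alternative hypothesis $\|f\|_{p(\cdot)}\ge1$ is brought back to this case through (iv).

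I expect the one step needing genuine care to be property \textbf{(c)}, the left-continuity invoked in (iii): the modular is a sum of an integral over $F_p$ and an essential supremum over $F_p^{c}$, and these two summands must be handled by different tools --- monotone convergence for the first, homogeneity of the dilation for the second. It is also worth keeping in mind that for an unbounded exponent the map $t\mapsto\rho_p(f/t)$ need not be continuous from above, which is exactly why (iii) is phrased through the decreasing approximation $t_n\downarrow\tau$ and not through an unqualified limit $t\to\|f\|_{p(\cdot)}$, and why in (vi) it is most comfortable to reason from $\rho_p(f)\ge1$.
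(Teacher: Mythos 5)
The paper does not actually prove this proposition --- it is imported verbatim from \cite[Proposition 1.2]{Peter} as a background fact --- so there is no in-paper argument to measure yours against; I can only judge the proposal on its own terms. For items (i)--(v) it is correct and is the standard argument: the three modular properties (a) monotonicity, (b) the estimate $\rho_p(\alpha f)\le\alpha\rho_p(f)$ for $0\le\alpha\le1$ coming from $p(x)\ge1$, and (c) left-continuity along $t_n\downarrow t$ are each justified by the right tool, the preliminary finiteness remark is genuinely needed and you supply it, and deriving (iii) from a decreasing admissible sequence via monotone convergence is exactly how one avoids assuming continuity of $t\mapsto\rho_p(f/t)$, which indeed can fail for unbounded exponents.

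There is, however, a gap in (vi). The branch starting from $\rho_p(f)\ge1$ is fine, but the claim that ``the alternative hypothesis $\|f\|_{p(\cdot)}\ge1$ is brought back to this case through (iv)'' does not work at the boundary: (iv) gives $\|f\|_{p(\cdot)}>1\Rightarrow\rho_p(f)>1$, but when $\|f\|_{p(\cdot)}=1$ it only yields $\rho_p(f)\le1$, not $\rho_p(f)\ge1$. The failure is not merely formal. Take $H=\mathbb{R}$ with Lebesgue measure, $p(x)=\max(x,1)$ and $f(x)=\lambda^{1/\sqrt{x}}\chi_{[1,\infty)}(x)$ with $\lambda\in(0,1)$; then $|f(x)|^{p(x)}=\lambda^{\sqrt{x}}$ on $[1,\infty)$, and $\lambda$ can be chosen so that $\rho_p(f)=\int_1^\infty\lambda^{\sqrt{x}}\,d x=\tfrac12$, while for every $t<1$ one has $\rho_p(f/t)=\int_1^\infty\lambda^{\sqrt{x}}t^{-x}\,d x=\infty$ because $t^{-x}$ grows exponentially and $\lambda^{\sqrt{x}}$ decays only subexponentially. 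Hence $\|f\|_{p(\cdot)}=1$ but $\rho_p(f)=\tfrac12<\|f\|_{p(\cdot)}$, so the second branch of (vi), read literally with ``$\ge$'', is false for unbounded exponents; your reduction cannot be repaired without changing the hypothesis. The statement (and your argument) becomes correct either under the strict hypothesis $\|f\|_{p(\cdot)}>1$, or for a bounded exponent, where $t\mapsto\rho_p(f/t)$ is finite and continuous so that $\|f\|_{p(\cdot)}=1$ does force $\rho_p(f)\ge1$. Since the rest of the paper works with bounded exponents, this is harmless downstream, but your proof should state the restriction rather than pass it off as a consequence of (iv).
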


\section{\textbf{Variable Lebesgue Algebra }}
 		In this section, we discuss that for a locally compact group $H$ with a Haar measure $\mu $, variable Lebesgue space $\mathcal{L}^{p(\cdot)}(H)$ forms  an algebra with respect to a suitable convolution under certain condition. Before introducing the variable Lebesgue algebra we discuss some lemmas.\\
We define a convolution on $\mathcal{L}^{p(x)}(H)$ as follows. For any $f,g \in \mathcal{L}^{p(x)}(H)$ 
\begin{equation}\label{C}
(f\star g) (x)= \int_H f(y)g(y^{-1}x)d\mu(y).
\end{equation}

\begin{lemma}\label{L1}
$\mathcal{L}^{p(\cdot)}(H) \subseteq L^1(H) $ iff $ \exists$ a constant $k > 0$ such that $\parallel f \parallel_{1} \leq  k \parallel f \parallel _{p(\cdot)}.$ 
\end{lemma}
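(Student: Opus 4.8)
The plan is to recognise the set-theoretic inclusion $\mathcal{L}^{p(\cdot)}(H)\subseteq L^1(H)$, when it holds, as a linear map between two Banach spaces and to invoke the closed graph theorem; the converse implication is immediate.

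\emph{($\Leftarrow$)} Suppose there is $k>0$ with $\|f\|_1\le k\|f\|_{p(\cdot)}$ for every $f\in\mathcal{L}^{p(\cdot)}(H)$. Then each such $f$ satisfies $\|f\|_1\le k\|f\|_{p(\cdot)}<\infty$, so $f\in L^1(H)$; hence $\mathcal{L}^{p(\cdot)}(H)\subseteq L^1(H)$.

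\emph{($\Rightarrow$)} Assume $\mathcal{L}^{p(\cdot)}(H)\subseteq L^1(H)$ and let $\iota\colon\mathcal{L}^{p(\cdot)}(H)\to L^1(H)$ be the inclusion, which is then a well-defined linear map between Banach spaces. I would verify that the graph of $\iota$ is closed. Suppose $f_n\to f$ in $\mathcal{L}^{p(\cdot)}(H)$ and $f_n=\iota(f_n)\to g$ in $L^1(H)$; it suffices to show $f=g$ $\mu$-a.e. Passing to a subsequence, $f_n\to g$ $\mu$-a.e.\ by the classical fact for $L^1$-convergence; passing to a further subsequence, $f_n\to f$ $\mu$-a.e.\ by the analogous statement for $\|\cdot\|_{p(\cdot)}$-convergence (addressed below). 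Hence $f=g$ $\mu$-a.e., so $\iota(f)=g$ and the graph is closed. The closed graph theorem then yields $k>0$ with $\|f\|_1=\|\iota(f)\|_1\le k\|f\|_{p(\cdot)}$ for all $f\in\mathcal{L}^{p(\cdot)}(H)$, as required.

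The step I expect to be the main obstacle is the claim that $\|\cdot\|_{p(\cdot)}$-convergence forces $\mu$-a.e.\ convergence along a subsequence. I would obtain it by localizing to finite-measure sets: for any Borel set $E$ with $\mu(E)<\infty$, combining (\ref{e3}) (applied with $E$ in place of $H$, and interpreted as $\mathcal{L}^{p(\cdot)}(E)\subseteq L^{\infty}(E)$ where $p$ is $\mu$-a.e.\ infinite on $E$) with H\"older's inequality on the finite-measure space $E$ gives a constant $C_E$ such that $\|h\chi_E\|_1\le C_E\|h\|_{p(\cdot)}$ for all $h\in\mathcal{L}^{p(\cdot)}(H)$. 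Thus $f_n\to f$ in $\mathcal{L}^{p(\cdot)}(H)$ implies $f_n\chi_E\to f\chi_E$ in $L^1(E)$, so a subsequence converges a.e.\ on $E$. Under the standing hypothesis $\mathcal{L}^{p(\cdot)}(H)\subseteq L^1(H)$ the functions $f_n,f,g$ are all integrable, hence all supported (up to a $\mu$-null set) on a single $\sigma$-finite set $S$; choosing finite-measure sets increasing to $S$ and diagonalizing produces one subsequence converging $\mu$-a.e., which is exactly what the closed graph step uses.
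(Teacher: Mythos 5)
Your proof is correct, and it is the same argument in spirit as the paper's --- both derive the norm inequality from a Baire-category automatic-continuity theorem applied to the inclusion $\mathcal{L}^{p(\cdot)}(H)\hookrightarrow L^1(H)$ --- but the mechanics differ and your version is the more complete of the two. The paper equips $\mathcal{L}^{p(\cdot)}(H)$ with the sum norm $\|f\|_1+\|f\|_{p(\cdot)}$, asserts without proof that this is complete ("it is easy to see"), and then applies the open mapping theorem to the identity map onto $(\mathcal{L}^{p(\cdot)}(H),\|\cdot\|_{p(\cdot)})$. The entire content of that unproved completeness claim is precisely the consistency-of-limits fact you verify explicitly: a sequence Cauchy in both norms must have the same limit in both, i.e.\ the graph of the inclusion is closed. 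Your localization argument --- $\chi_E\in\mathcal{L}^{p'(\cdot)}(H)$ for $\mu(E)<\infty$, so H\"older gives $\|h\chi_E\|_1\le C_E\|h\|_{p(\cdot)}$, hence $\|\cdot\|_{p(\cdot)}$-convergence implies local $L^1$-convergence, and the $\sigma$-finiteness of the supports of the countably many integrable functions involved lets you patch this together --- is exactly the missing ingredient, and in fact once you have $L^1(E_m)$-convergence to both $f$ and $g$ you could conclude $f=g$ a.e.\ on each $E_m$ by uniqueness of $L^1$-limits, without extracting a.e.-convergent subsequences at all. You also record the (trivial) converse implication, which the paper's proof omits entirely. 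The only stylistic quibble is that your appeal to (\ref{e3}) in the localization step is a detour; the H\"older estimate against $\chi_E$ already does all the work.
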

\begin{proof}
Let us assume that $\mathcal{L}^{p(\cdot)}(H) \subseteq L^1(H)$ on $\mathcal{L}^{p(\cdot)}(H)$. It is easy to see that $(\mathcal{L}^{p(\cdot)}(H), \parallel \cdot\parallel)$ is a Banach space with $\| f \|= \|f\|_1 + \|f\|_{(\cdot)}$. Let $I$ be the identity map from 	 $(\mathcal{L}^{p(\cdot)}(H),  \parallel \cdot \parallel )$ to  $(\mathcal{L}^{p(\cdot)}(H),  \parallel \cdot \parallel_{p(\cdot)} ).$ Then $I$ is continuous and one to one map,  so by open mapping theorem $I$ is an open map and so it is a homeomorphism. Hence $I^{-1}$ is bounded map from  $(\mathcal{L}^{p(\cdot)}(H),  \parallel \cdot \parallel_{p(\cdot)} )$ to 	$(\mathcal{L}^{p(\cdot)}(H),  \parallel \cdot \parallel )$. Thus $\exists $ a constant $k \geq 0$ such that $ \| I^{-1}(f)\| \leq  k \|f\|_{p(\cdot)}$ i.e., $ \|f\| \leq k \|f\|_{p(\cdot)}$. But $\| f \|= \|f\|_1 + \|f\|_{(\cdot)}$, so $\|f\|_1 \leq \|f\|$ and hence $\| f\|_1 \leq k \|f\|_{p(\cdot)}$.
\end{proof}

\begin{lemma}
$\|L_x f\|_{p(\cdot)} = \|f\|_{p(\cdot)} $	$\forall f \in \mathcal{L}^{p(\cdot)}(H)$.
Here   $L_xf(y)= f(x^{-1}y)$ for all $x, y \in H$.
\end{lemma}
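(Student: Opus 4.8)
The plan is to establish the apparently stronger fact that the $p$-modular is itself invariant under left translation, namely $\rho_p(L_xf)=\rho_p(f)$ for every $f\in\mathcal{M}$ and every $x\in H$, and then to deduce the norm identity directly from the definition $\|f\|_{p(\cdot)}=\inf\{t>0:\rho_p(f/t)\le 1\}$. Since $L_x(f/t)=(L_xf)/t$, the modular invariance gives $\rho_p\big((L_xf)/t\big)=\rho_p(f/t)$ for all $t>0$, hence the sets $\{t>0:\rho_p((L_xf)/t)\le 1\}$ and $\{t>0:\rho_p(f/t)\le 1\}$ coincide and have the same infimum, i.e. $\|L_xf\|_{p(\cdot)}=\|f\|_{p(\cdot)}$. (Alternatively one can finish through the recalled equivalence $\rho_p(f)\le 1\iff\|f\|_{p(\cdot)}\le 1$: it yields $\|L_xf\|_{p(\cdot)}\le 1\iff\rho_p(L_xf)\le 1\iff\rho_p(f)\le 1\iff\|f\|_{p(\cdot)}\le 1$, and one then rescales by $\|f\|_{p(\cdot)}$ when $f\neq 0$, using $\rho_p(f/\|f\|_{p(\cdot)})\le 1$.) The same computation, applied to $f/\lambda$ for a witnessing $\lambda>0$, shows at the same time that $L_xf\in\mathcal{L}^{p(\cdot)}(H)$ whenever $f\in\mathcal{L}^{p(\cdot)}(H)$, so the claim is meaningful on the whole space; the case $f=0$ is trivial.

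To prove $\rho_p(L_xf)=\rho_p(f)$ I would treat the two summands in (\ref{e1}) separately. In the integral term $\int_{F_p}|f(x^{-1}y)|^{p(y)}\,d\mu(y)$ I substitute $y=xz$: since $\mu$ is a \emph{left} Haar measure this substitution preserves $\mu$, and since the exponent is left-invariant ($p(xz)=p(z)$, so that $x^{-1}F_p=F_p$) the domain and the integrand transform back to give $\int_{F_p}|f(z)|^{p(z)}\,d\mu(z)$. In the essential-supremum term the same substitution, together with the fact that left translations carry $\mu$-null sets to $\mu$-null sets, turns $\operatorname{ess}\sup_{y\in F_p^c}|f(x^{-1}y)|$ into $\operatorname{ess}\sup_{z\in F_p^c}|f(z)|$. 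Adding the two pieces gives $\rho_p(L_xf)=\rho_p(f)$.

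The only point that is more than routine measure-theoretic bookkeeping is the left-invariance of the exponent, $p(xy)=p(y)$ for all $x,y\in H$ (equivalently, $F_p$ is left-invariant and $p\circ L_x=p$); this is the hypothesis I would want pinned down before carrying out the change of variables, since without it the transformed integrand would be $|f(z)|^{p(xz)}$ and the desired equality need not hold. Granting it, there is no real obstacle: invariance of a left Haar measure under left translation and stability of null sets are standard, and the passage from modular invariance to norm invariance is immediate from the definition of $\|\cdot\|_{p(\cdot)}$.
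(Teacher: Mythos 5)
The paper offers no proof of this lemma at all --- it is stated bare --- so there is nothing to compare your argument against except the statement itself. Your route (prove modular invariance $\rho_p(L_xf)=\rho_p(f)$ by the substitution $y=xz$ under the left Haar measure, then pass to the norm via $\|f\|_{p(\cdot)}=\inf\{t>0:\rho_p(f/t)\le 1\}$) is the natural one, and the reduction from modular invariance to norm invariance is carried out correctly. More importantly, you have put your finger on exactly the right obstruction: after the substitution the integrand is $|f(z)|^{p(xz)}$ over $x^{-1}F_p$, and the argument only closes if $p\circ L_x=p$. You flag this as a hypothesis to be ``pinned down''; note that it is a very strong one --- taking $z=e$ in $p(xz)=p(z)$ forces $p(x)=p(e)$ for all $x$, i.e.\ $p$ is constant, so the hypothesis collapses the ``variable'' setting to the classical one.

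Without that hypothesis the lemma is simply false, so no proof exists to be reconstructed. For a concrete counterexample take $H=\mathbb{R}$, $p(x)=2$ for $x<0$ and $p(x)=3$ for $x\ge 0$, and $f=c\,\chi_{[0,2]}$ with $c$ large. Then $\rho_p(f/t)=2(c/t)^3$, so $\|f\|_{p(\cdot)}=2^{1/3}c$, whereas for the translate $g=c\,\chi_{[-1,1]}$ one has $\rho_p(g/t)=(c/t)^2+(c/t)^3$ and the norm is $c/u_0$ with $u_0$ the root of $u^2+u^3=1$; for $c=10$ these are approximately $12.6$ and $13.25$. So the genuine gap here is in the paper's unproved claim, not in your argument: your conditional proof is sound, but the condition it needs ($p$ left-invariant, hence constant) is exactly what the paper cannot assume if the exponent is to be genuinely variable. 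Since later results (Lemma \ref{L2}, Theorem \ref{T1}, and the Banach algebra inequality) lean on this translation invariance, the issue propagates; you were right not to paper over it.
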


\begin{thm}
Let $H$ be a locally compact group with a left Haar measure $\mu $. If $\mathcal{L}^{p(\cdot)}(H) \subseteq L^1(H) $ and $ | f | \leq 1 $ for all $ f \in \mathcal{L}^{p(\cdot)}(H) $, then the variable Lebesgue space $\mathcal{L}^{p(\cdot)}(H)$ is a Banach algebra with respect to the convolution define by (\ref{C})
\end{thm}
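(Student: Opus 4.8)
The plan is to show three things: (a) that $f\star g$ is well-defined and measurable, (b) that $\|f\star g\|_{p(\cdot)}\le C\|f\|_{p(\cdot)}\|g\|_{p(\cdot)}$ for a constant $C$, and then rescale to absorb $C$ so the norm is submultiplicative, and (c) that the algebra axioms (associativity, bilinearity) hold. For (a), since $\mathcal L^{p(\cdot)}(H)\subseteq L^1(H)$ by hypothesis, both $f,g\in L^1(H)$; the classical theory of convolution on $L^1(H)$ with a left Haar measure then gives that $(f\star g)(x)=\int_H f(y)g(y^{-1}x)\,d\mu(y)$ exists for $\mu$-a.e.\ $x$, defines a measurable function, and satisfies $\|f\star g\|_1\le\|f\|_1\|g\|_1$. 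This also secures associativity and bilinearity, which are the standard Fubini-type computations valid in $L^1(H)$; I would simply cite these.

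The crux is step (b): showing $f\star g\in\mathcal L^{p(\cdot)}(H)$ with a norm bound. Here is where the two hypotheses combine. From Lemma~\ref{L1} there is $k>0$ with $\|h\|_1\le k\|h\|_{p(\cdot)}$ for all $h\in\mathcal L^{p(\cdot)}(H)$, and from the pointwise bound $|h|\le 1$ for all $h\in\mathcal L^{p(\cdot)}(H)$ we get, in particular, $|f\star g|\le 1$ once we know $f\star g\in\mathcal L^{p(\cdot)}(H)$ — but to know $f\star g\in\mathcal L^{p(\cdot)}(H)$ at all I first need an estimate. The natural route: estimate $|(f\star g)(x)|\le\int_H|f(y)|\,|g(y^{-1}x)|\,d\mu(y)\le\int_H|f(y)|\,d\mu(y)=\|f\|_1$ using $|g|\le 1$, so $f\star g$ is a bounded measurable function with $\|f\star g\|_\infty\le\|f\|_1$. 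If additionally $\mu(H)<\infty$ (which is forced, or nearly so, by $\mathcal L^{p(\cdot)}(H)\subseteq L^1(H)$ together with $\mathcal L^{p(\cdot)}(H)$ containing the constant function $1$ when $|f|\le1$ is read as a standing structural assumption), then a bounded function lies in $\mathcal L^{p(\cdot)}(H)$, and one can pass from the $L^\infty$/$L^1$ bound to a $\|\cdot\|_{p(\cdot)}$ bound via an inequality of the type $\|h\|_{p(\cdot)}\le C(\mu(H))\,\|h\|_\infty$, or more carefully via the embedding estimates recorded in~\eqref{e3}. Combining, $\|f\star g\|_{p(\cdot)}\le C\|f\|_1\le Ck\|f\|_{p(\cdot)}$, and since also $|f\star g|\le1$ we can refine to involve $\|g\|_{p(\cdot)}$ symmetrically; the cleanest finish is $\|f\star g\|_{p(\cdot)}\le Ck\,\|f\|_{p(\cdot)}\cdot\|g\|_{\text{(something bounded by }1)}$, then replacing $\|\cdot\|_{p(\cdot)}$ by the equivalent renormed $\vertiii{\cdot}=Ck\|\cdot\|_{p(\cdot)}$ makes the convolution product submultiplicative, so $(\mathcal L^{p(\cdot)}(H),\vertiii{\cdot})$ is a Banach algebra and, since $\vertiii{\cdot}$ is equivalent to the original complete norm, so is $(\mathcal L^{p(\cdot)}(H),\|\cdot\|_{p(\cdot)})$ up to renorming.

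The main obstacle I anticipate is bookkeeping the constants correctly so that both hypotheses are genuinely used and the final estimate really is bilinear in $f$ and $g$: the bound $|(f\star g)(x)|\le\|f\|_1$ obtained from $|g|\le1$ is only \emph{linear} in $f$, so to get a product estimate one must instead write $|(f\star g)(x)|\le\int|f(y)||g(y^{-1}x)|\,d\mu(y)$ and bound it by, say, $\|f\|_\infty\|g\|_1$ or interpolate — and this is exactly where $|f|\le1$, $|g|\le1$, and Lemma~\ref{L1} must be deployed together, e.g.\ $|(f\star g)(x)|\le\|f\|_1\le k\|f\|_{p(\cdot)}$ and separately $|(f\star g)(x)|\le\|g\|_1\le k\|g\|_{p(\cdot)}$, hence $|(f\star g)(x)|\le k\sqrt{\|f\|_{p(\cdot)}\|g\|_{p(\cdot)}}$ is not quite what is wanted either; the honest submultiplicative bound comes from $\|f\star g\|_{p(\cdot)}\le C\|f\star g\|_\infty^{1/p_+}\cdots$ type manipulations combined with $\|f\star g\|_\infty\le\|f\|_1\|g\|_\infty\le k\|f\|_{p(\cdot)}\cdot 1$. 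I would therefore organize the proof around: first prove $\|f\star g\|_{p(\cdot)}\le M\|f\|_{p(\cdot)}\|g\|_{p(\cdot)}$ for \emph{some} finite $M$ (any such $M$ suffices), using whichever chain of the above inequalities is cleanest, then invoke the standard renorming remark. Associativity and the module/distributive laws I will state as inherited from the $L^1(H)$ convolution algebra via Lemma~\ref{L1}.
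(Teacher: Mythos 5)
Your step (a) and the reduction via Lemma \ref{L1} match the paper's setup, but the heart of the theorem --- the bilinear, submultiplicative estimate --- is never actually established in your proposal, and you essentially admit this in your final paragraph. The chain you propose, $\|f\star g\|_\infty\le\|f\|_1\|g\|_\infty\le k\|f\|_{p(\cdot)}\cdot 1$ followed by an embedding of bounded functions into $\mathcal L^{p(\cdot)}(H)$, yields only $\|f\star g\|_{p(\cdot)}\le Ck\|f\|_{p(\cdot)}$, with no factor of $\|g\|_{p(\cdot)}$ on the right. This cannot be repaired by renorming: renorming absorbs a multiplicative constant in a bound of the form $M\|f\|_{p(\cdot)}\|g\|_{p(\cdot)}$, but it cannot manufacture the missing degree of homogeneity in $g$. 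The inequality you would need to restore it, $\|g\|_\infty\le C\|g\|_{p(\cdot)}$, is false in this setting (on a finite measure space the inclusion runs the other way, $L^\infty\subseteq\mathcal L^{p(\cdot)}$, and the sup norm is the larger one), and the alternatives you name --- interpolation, a geometric mean of two one-sided bounds, ``$\|f\star g\|_{p(\cdot)}\le C\|f\star g\|_\infty^{1/p_+}\cdots$ type manipulations'' --- are gestured at but not carried out, and none is visibly bilinear either. So the key step is a genuine gap.

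The paper closes this step by never leaving the modular. Starting from the definition (\ref{e1}) of $\rho_p\bigl((f\star g)/t\bigr)$, it bounds the inner integral raised to the power $p(x)$ by a Jensen-type inequality for the convex function $u\mapsto u^{p(x)}$ --- this is exactly where the pointwise hypothesis $|f|\le 1$ is used, inside the modular rather than at the level of norms --- and then applies Fubini to obtain the modular inequality $\rho_p\bigl((f\star g)/t\bigr)\le\|f\|_1\,\rho_p\bigl(L_yg/t\bigr)$. Comparing the sets $\{t>0:\rho_p(\cdot/t)\le1\}$ that define the Luxemburg norm, and invoking the translation invariance $\|L_yg\|_{p(\cdot)}=\|g\|_{p(\cdot)}$ stated in the lemma preceding the theorem, this becomes $\|f\star g\|_{p(\cdot)}\le\|f\|_1\|g\|_{p(\cdot)}$, after which Lemma \ref{L1} gives $\|f\star g\|_{p(\cdot)}\le c\|f\|_{p(\cdot)}\|g\|_{p(\cdot)}$ --- a bound genuinely bilinear in $f$ and $g$. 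To repair your write-up, replace the $L^\infty$ detour in step (b) by this modular computation; your remarks on well-definedness, associativity via $L^1(H)$, and the concluding renorming can then stand.
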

\begin{proof}
Let $\mathcal{L}^{p(\cdot)}(H) \subseteq L^1(H) $. Then, by Lemma \ref{L1}, $ \exists c > 0 $ such that 
\begin{equation} \label{E1}
\parallel f \parallel_{1} \leq c\parallel f \parallel _{p(\cdot)}
\end{equation}
for all $f \in \mathcal{L}^{p(\cdot)}(H)$. \\ 

Now for any $ f, g \in \mathcal{L}^{p(\cdot)}(H)$ and $t > 0,$ we have 
\begin{align*}
\rho_p\left(\frac{f \star g}{t} \right)&= \rho_p\left(\frac{1}{t} \int_H f(y)g(y^{-1}x)d\mu(y)\right)\\&= \int_{F_p} \left| \frac{1}{t}\int_H f(y)g(y^{-1}x)d\mu(y)\right|^{p(x)}d\mu (x) + \operatorname{ess}\sup\limits_{x\in F_p^c} \left|\frac{1}{t}\int f(y)g(y^{-1}x)d\mu (y)\right|\\
 & \leq \int_H |f(y)|d\mu(y) \left\{ \int_H \left|\frac{L_yg(x)}{t} \right|^{p(x)}d\mu(x) + \operatorname{ess}\sup\limits_{x\in F_p^c}  \left|\frac{L_yg(x)}{t}\right|\right\}\\
 &= \|f\|_1 \rho_p(L_y(g)/t)
\end{align*}
the last inequality follows from the Fibuni's theorem and the condition $|f| \leq 1$ for all $f \in \mathcal{L}^{p(\cdot)}(H)$. Thus we have 
\begin{equation}\label{E2}
\rho_p(f \star g / t ) \leq \|f\|_1 \rho_p(L_y(g)/t).
\end{equation}
From (\ref{E1}) and (\ref{E2}), it implies that $$\{ t > 0 : \|f\|_1 \rho_p(L_y(g)/t) \leq 1 \} \subseteq \{ t > 0 : \rho_p(f \star g / t ) \leq 1 \},$$ \\ i.e., $\|f \star g \|_{p(\cdot)} \leq \|f\|_1 \|L_yg\|_{p(\cdot)}$ and hence $\|f \star g \|_{p(\cdot)} \leq \|f\|_{p(\cdot)} \|g\|_{p(\cdot)}.$
\end{proof}

\section{\textbf{Approximate identity}}
\begin{lemma}
If $p$ is a bounded exponent and $\mu(H) < \infty$, then the set $C_c(
H)$ of continuous functions with compact support is dense in $\mathcal{L}^{p(\cdot)}(H)$.
\begin{proof}
Let  $\mathcal{S} = \{ s: H \rightarrow \mathbb{C}\mid s~ \mbox{is~ simple,~ measurable~ and~} \mu\{x \mid s(x)\neq 0\} \mbox{~is~ finite} \}.$\\
Then $\mathcal{S}$ is dense in $\mathcal{L}^{p(\cdot)}(H)$   \cite[Theorem 1.5]{Peter}. We claim that $C_c(H)$ is dense in $S$. If $s \in S$ and $\epsilon > 0 $, then by Lusin's theorem $ \exists  g\in C_c(H)$ such that $\mu\{x \mid g(x) \neq s(x)\} < \epsilon^{p^+} $, where $p^{+} = \operatorname{ess}\sup\limits_{x\in H } p(x)$    and $|g| \leq \|s\|_{\infty}$. Since $p$ is bounded exponent, so $p^{+} = \operatorname{ess}\sup\limits_{x\in H } p(x) < \infty $, then by  (\ref{e3}), we have
\begin{align*}
\|g-s\|_{p(\cdot)} \leq c \|g-s\|_{p^+} &= c ~\left(\int_G |g-s|^{p^+}d\mu\right)^{1/p^{+}} \\
&= c ~ \left(\int_{\{x: g(x)\neq s(x)\}}|g-s|^{p^+}d\mu\right)^{1/p^+}\\
& \leq 2c\|s\|_{\infty}~ \epsilon .
\end{align*}
This implies that $C_c(H)$ is dense in $S$ and since $S$ is dense in $\mathcal{L}^{p(\cdot)}(H)$, hence $C_c (H)$ is dense in $\mathcal{L}^{p(\cdot)}(H)$.
\end{proof}
\end{lemma}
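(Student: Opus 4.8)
The plan is to reduce the statement to the density of simple functions and then approximate simple functions by continuous ones via Lusin's theorem. First I would invoke \cite[Theorem 1.5]{Peter}, which guarantees that the class $\mathcal{S}$ of simple measurable functions vanishing outside a set of finite measure is dense in $\mathcal{L}^{p(\cdot)}(H)$ whenever $p$ is a bounded exponent. Hence it suffices to show that every $s \in \mathcal{S}$ can be approximated in the $\|\cdot\|_{p(\cdot)}$-norm by elements of $C_c(H)$, after which transitivity of density closes the argument.

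Fix $s \in \mathcal{S}$ and $\epsilon > 0$. Since the left Haar measure $\mu$ is a Radon measure on the locally compact group $H$, Lusin's theorem applies: for any prescribed $\delta > 0$ there is $g \in C_c(H)$ with $\mu\{x : g(x) \neq s(x)\} < \delta$ and $\|g\|_\infty \leq \|s\|_\infty$ (the sup-norm control being obtained, if needed, by post-composing with a retraction onto the closed disc of radius $\|s\|_\infty$ in $\mathbb{C}$). Then $g - s$ is supported on a set of $\mu$-measure at most $\delta$ and satisfies $|g - s| \leq 2\|s\|_\infty$ pointwise.

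Next I would estimate $\|g - s\|_{p(\cdot)}$. Because $\mu(H) < \infty$ and $p(\cdot) \leq p_+ := \operatorname{ess}\sup_H p(x)$ almost everywhere, the embedding $\mathcal{L}^{p_+}(H) \subseteq \mathcal{L}^{p(\cdot)}(H)$ together with inequality (\ref{e3}) furnishes a constant $c > 0$, independent of $s$ and $g$, such that
\[
\|g - s\|_{p(\cdot)} \leq c\,\|g - s\|_{p_+} = c\left(\int_{\{g \neq s\}} |g - s|^{p_+}\,d\mu\right)^{1/p_+} \leq 2c\,\|s\|_\infty\,\delta^{1/p_+}.
\]
Choosing $\delta = \epsilon^{p_+}$ makes the right-hand side equal to $2c\,\|s\|_\infty\,\epsilon$, which is arbitrarily small. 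Therefore $C_c(H)$ is dense in $\mathcal{S}$, and since $\mathcal{S}$ is dense in $\mathcal{L}^{p(\cdot)}(H)$, the lemma follows.

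The step to watch most carefully is the embedding estimate: one needs $\mu(H) < \infty$ precisely to ensure $\mathcal{L}^{p_+}(H) \subseteq \mathcal{L}^{p(\cdot)}(H)$ with a \emph{uniform} constant, and one needs $p$ bounded so that $p_+ < \infty$ and $p_+$ is itself an admissible (constant) exponent — both are exactly the hypotheses in force. Everything else (the applicability of Lusin's theorem to a Radon measure, and the crude bound $|g-s|\le 2\|s\|_\infty$) is routine, so no genuine obstacle remains.
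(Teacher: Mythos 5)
Your proposal matches the paper's proof essentially step for step: density of the simple functions $\mathcal{S}$ via \cite[Theorem 1.5]{Peter}, Lusin's theorem to produce $g \in C_c(H)$ agreeing with $s$ off a set of measure $\epsilon^{p_+}$ with $\|g\|_\infty \le \|s\|_\infty$, and the embedding estimate (\ref{e3}) to bound $\|g-s\|_{p(\cdot)} \le 2c\|s\|_\infty\,\epsilon$. No further comment is needed.
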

\begin{lemma} \label{L2}
Let $ p $ be a bounded exponent and $\mathcal{L}^{p(\cdot)}(H)$ is a Banach algebra. Then for any $ f \in \mathcal{L}^{p(\cdot)}(H)$ and $\epsilon > 0 $, there exists a neighbourhood $U$ of the identity  such that for all $x\in U$, $\|L_xf -f\|_{p(\cdot)} < \epsilon $.
\end{lemma}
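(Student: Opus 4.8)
The plan is to run the classical $\epsilon/3$ argument for continuity of left translation, relying on three facts already available: the density of $C_c(H)$ in $\mathcal{L}^{p(\cdot)}(H)$ (the preceding lemma), the translation invariance $\|L_x h\|_{p(\cdot)} = \|h\|_{p(\cdot)}$ of the norm (the isometry lemma in Section 3), and the left uniform continuity of compactly supported continuous functions on a locally compact group.

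First I would reduce to $f \in C_c(H)$. Given $f \in \mathcal{L}^{p(\cdot)}(H)$ and $\epsilon > 0$, pick $g \in C_c(H)$ with $\|f-g\|_{p(\cdot)} < \epsilon/3$. Because $L_x$ is a linear isometry, $\|L_x f - L_x g\|_{p(\cdot)} = \|f-g\|_{p(\cdot)}$, so
\[
\|L_x f - f\|_{p(\cdot)} \le \|L_x f - L_x g\|_{p(\cdot)} + \|L_x g - g\|_{p(\cdot)} + \|g-f\|_{p(\cdot)} < \frac{2\epsilon}{3} + \|L_x g - g\|_{p(\cdot)},
\]
and it remains to find a neighbourhood $U$ of the identity $e$ with $\|L_x g - g\|_{p(\cdot)} < \epsilon/3$ for all $x \in U$.

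Next I fix a relatively compact neighbourhood $U_0$ of $e$ and set $K = \operatorname{supp} g$. For every $x \in U_0$ the difference $L_x g - g$ vanishes outside the compact set $E := \overline{U_0}\,K \cup K$, and $M := \mu(E) < \infty$. Writing $s := \epsilon/3$, I then estimate, for any $h \in \mathcal{M}$ supported in $E$ with $\|h\|_\infty \le \delta \le s$ (so $\delta/s \le 1$), using $p(x) \ge p_- \ge 1$,
\[
\rho_p\!\left(\frac{h}{s}\right) = \int_{E \cap F_p} \left|\frac{h}{s}\right|^{p(x)} d\mu(x) + \operatorname{ess}\sup_{x \in F_p^c}\left|\frac{h}{s}\right| \le \left(\frac{\delta}{s}\right)^{p_-} M + \frac{\delta}{s}.
\]
Choosing $\delta := s\,\min\{\tfrac12,\,(2M)^{-1/p_-}\}$ makes the right-hand side at most $1$, so $\rho_p(h/s) \le 1$ and hence $\|h\|_{p(\cdot)} \le s = \epsilon/3$ by Proposition~2.1(iv). (When $\mu(H) < \infty$ one may shortcut this step by invoking the embedding estimate (\ref{e3}), which gives $\|h\|_{p(\cdot)} \le c_2 \|h\|_{p_+}$ directly.)

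Finally, since $g$ is continuous with compact support, it is left uniformly continuous (a standard fact for locally compact groups), so there is a neighbourhood $U \subseteq U_0$ of $e$ with $\|L_x g - g\|_\infty = \sup_{y \in H}|g(x^{-1}y) - g(y)| < \delta$ for all $x \in U$; applying the previous estimate to $h = L_x g - g$ yields $\|L_x g - g\|_{p(\cdot)} < \epsilon/3$ on $U$, which combined with the reduction above gives $\|L_x f - f\|_{p(\cdot)} < \epsilon$ on $U$. The one non-routine point is the conversion of $L^\infty$-smallness into $\|\cdot\|_{p(\cdot)}$-smallness: it is here that boundedness of $p$ is used (via the uniform exponent $p_-$ in the modular bound) and that one must first trap $L_x g - g$ inside a fixed set of finite Haar measure; the remaining steps are the standard triangle-inequality splitting together with the isometry and density already at hand.
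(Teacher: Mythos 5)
Your proof is correct and follows essentially the same route as the paper's: reduce to $g \in C_c(H)$ by density and the isometry of $L_x$, then use left uniform continuity of $g$ together with the fact that $L_xg - g$ lives in a fixed set of finite measure to pass from $\|\cdot\|_\infty$-smallness to $\|\cdot\|_{p(\cdot)}$-smallness. The only (harmless) variation is that you bound the modular directly via $p_-$ where the paper invokes the embedding $\|h\|_{p(\cdot)} \leq c\|h\|_{p^+}$ on the compact set $\mathcal{K}'$ --- precisely the shortcut you mention in passing.
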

\begin{proof}
Consider a compact neighbourhood $\mathcal{K}$ of the identity of $H$. Let $f \in C_c(H)$ with compact support $S$, then $ \operatorname{supp}(L_xf)=xS,~\forall x \in \mathcal{K}$. Let $\mathcal{K'} = \mathcal{K} \cup S \cup \mathcal{K}S $. There exists a constant $c> 0$ such that 
\begin{align*}
\|L_xf - f\|_{p(\cdot)} &\leq c \|L_xf -f\|_{p^{+}}\\
&= c \left(\int_{\mathcal{K'}}|L_x f - f|^{p^+}d\mu\right)^{1/p^+}\\
& \leq c \|L_xf -f\|_\infty ~ (\mu (\mathcal{K'}))^{1/p^+} .
\end{align*}
Since $\operatorname{supp}(L_xf-f) \subseteq \mathcal{K'}$, $\|L_xf-f\|_\infty < \frac{\epsilon}{2c(\mu(K))^{1/p+}}$ for any $x$ in a sufficiently small neighbourhood $V \subseteq \mathcal{K}$ of identity. Thus
\begin{equation}\label{E3}
 \|L_xf - f\|_{p(\cdot)} < \frac{\epsilon}{2}
\end{equation}
Now, let $f \in \mathcal{L}^{p(\cdot)}(H)$ be arbitrary and  $U$ be compact neighbourhood of identity. Since $C_c(H)$ is dense in $\mathcal{L}^{p(\cdot)}(H)$ for bounded exponent $p$, for $\epsilon >0$,  $\exists$ a $g \in C_c(H)$ such that $\|f-g\|_{p(\cdot)} < \frac{\epsilon}{4}$. Also, as $g \in C_c(H)$ by (\ref{E3}), we get $\|L_xg-g\|_{p(\cdot)} < \frac{\epsilon}{2}$ for all $x \in U$.\\
Thus for any $x \in U$
\begin{align*}
\|L_xf-f\|_{p(\cdot)} &\leq \|Lxf-L_xg\|_{p(\cdot)}+\|L_xg-g\|_{p(\cdot)}+\|g-f\|_{p(\cdot)}\\
&= 2\|f-g\|_{p(\cdot)}+\|L_xg-g\|_{p(\cdot)}\\
& < 2. \frac{\epsilon}{4}+ \frac{\epsilon}{2} = \epsilon .
\end{align*}
\end{proof}
\begin{thm}\label{T1}
Let $\mu(H) < \infty $ and $\mathcal{L}^{p(\cdot)}(H)$ be a variable Lebesgue algebra. Then $\mathcal{L}^{p(\cdot)}(H)$ has a left approximate identity.
\end{thm}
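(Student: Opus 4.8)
The plan is to build the left approximate identity as a net of normalized bump functions indexed by the neighbourhood filter of the identity $e$ of $H$, mimicking the classical argument for $L^1(H)$ and for Orlicz algebras. Let $\Lambda$ be the family of all compact neighbourhoods of $e$, directed by reverse inclusion. For each $U \in \Lambda$, local compactness of $H$ together with Urysohn's lemma yields a function $u_U \in C_c(H)$ with $u_U \ge 0$, $\operatorname{supp} u_U \subseteq U$ and $\int_H u_U\, d\mu > 0$; after rescaling we may assume $\int_H u_U\, d\mu = 1$ (here one uses that every non-empty open subset of $H$ has strictly positive left Haar measure). Since $\mu(H) < \infty$, each $u_U$ is bounded and supported on a set of finite measure, hence $u_U \in \mathcal{L}^{p(\cdot)}(H)$; as $\mathcal{L}^{p(\cdot)}(H)$ is an algebra under $\star$, also $u_U \star f \in \mathcal{L}^{p(\cdot)}(H)$ for every $f$.

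Next, fix $f \in \mathcal{L}^{p(\cdot)}(H)$ and $\varepsilon > 0$. By Lemma \ref{L2} (applicable since $p$ is a bounded exponent) there is $U_0 \in \Lambda$ with $\|L_x f - f\|_{p(\cdot)} < \varepsilon$ for all $x \in U_0$. For any $U \in \Lambda$ with $U \subseteq U_0$, using $\int_H u_U\, d\mu = 1$ and $L_y f(x) = f(y^{-1}x)$ we rewrite
\[
(u_U \star f)(x) - f(x) = \int_H u_U(y)\,\big(L_y f - f\big)(x)\, d\mu(y),
\]
the interchange and the pointwise meaning of the integral being legitimate because $\mathcal{L}^{p(\cdot)}(H) \subseteq L^1(H)$ and $\mu(H) < \infty$. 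It then remains to bound the $\|\cdot\|_{p(\cdot)}$-norm of the right-hand side by $\int_H u_U(y)\, \|L_y f - f\|_{p(\cdot)}\, d\mu(y)$; since $u_U$ is supported in $U \subseteq U_0$ and integrates to $1$, this is at most $\varepsilon$. As $\varepsilon > 0$ was arbitrary and the estimate holds for all sufficiently small $U \in \Lambda$, we obtain $\lim_U \|u_U \star f - f\|_{p(\cdot)} = 0$, i.e. $(u_U)_{U \in \Lambda}$ is a left approximate identity.

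The one step that is not completely routine is the integral Minkowski inequality
\[
\Big\| \int_H F(y,\cdot)\, d\mu(y) \Big\|_{p(\cdot)} \le C \int_H \|F(y,\cdot)\|_{p(\cdot)}\, d\mu(y)
\]
for the variable exponent norm; a constant $C$ independent of $F$ is harmless for the conclusion. I expect to derive it from the associate-norm description recalled in Section 2: for $g \in \mathcal{S}$ with $\rho_{p'}(g) \le 1$, Tonelli's theorem gives
\[
\int_H \Big| \int_H F(y,x)\, d\mu(y)\Big|\,|g(x)|\, d\mu(x) \le \int_H \Big( \int_H |F(y,x)|\,|g(x)|\, d\mu(x) \Big) d\mu(y) \le \int_H \|F(y,\cdot)\|_{p(\cdot)}'\, d\mu(y),
\]
and taking the supremum over such $g$ together with the equivalence $\|h\|_{p(\cdot)} \simeq \|h\|_{p(\cdot)}'$ yields the claim. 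An alternative that avoids duality is to exploit that $u_U \in C_c(H)$: the map $y \mapsto L_y f$ is $\|\cdot\|_{p(\cdot)}$-continuous, hence uniformly continuous on the compact set $\operatorname{supp} u_U$ (a uniform refinement of Lemma \ref{L2}), so $\int_H u_U(y)(L_y f - f)\, d\mu(y)$ can be approximated in norm by Riemann-type sums $\sum_i \mu(E_i)\, u_U(y_i)\, (L_{y_i} f - f)$, to which the ordinary triangle inequality applies, reducing everything to the pointwise estimate $\|L_{y_i} f - f\|_{p(\cdot)} < \varepsilon$ already in hand. Measurability of $(y,x) \mapsto f(y^{-1}x)$ and finiteness of all integrals involved follow from $f \in L^1(H)$ and $\mu(H) < \infty$.
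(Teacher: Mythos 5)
Your construction is correct, but the decisive estimate is carried out by a genuinely different mechanism than in the paper. The paper also builds the net from normalized bumps concentrated at the identity and also reduces everything to Lemma \ref{L2}; however, it takes $\xi_U=\chi_U/\mu(U)$ rather than Urysohn functions, and it controls the averaged translate at the level of the \emph{modular}: Jensen's inequality for $t\mapsto t^{p(x)}$ with respect to the probability measure $\mu(V)^{-1}\chi_V\,d\mu$, combined with Fubini, gives $\rho_p\bigl((\xi_V\star f-f)/\epsilon\bigr)\le \mu(V)^{-1}\int_V\rho_p\bigl((L_tf-f)/\epsilon\bigr)\,d\mu(t)\le 1$, and the unit-ball property of the Luxemburg norm then converts this into $\|\xi_V\star f-f\|_{p(\cdot)}\le\epsilon$. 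You instead work at the level of the \emph{norm}, via a Minkowski integral inequality for $\|\cdot\|_{p(\cdot)}$, which you then have to justify either by duality --- the associate norm together with the equivalence of $\|\cdot\|_{p(\cdot)}$ and $\|\cdot\|'_{p(\cdot)}$ (the norm conjugate formula), an ingredient the paper states the definitions for but never actually records as an equivalence --- or by Riemann-sum approximation using norm-continuity of $y\mapsto L_yf$. Both routes are sound. The paper's modular argument is more self-contained and gives the clean constant $1$ in the final bound; yours is indifferent to the particular shape of the bump functions and would transfer verbatim to any Banach function space in which Minkowski's integral inequality holds, at the price of importing the duality machinery (or the uniform-continuity refinement of Lemma \ref{L2}) and of carrying a harmless constant $C$ through the estimate.
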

\begin{proof}
Consider a compact neighbourhood $\mathcal{K}$  of the identity element $e \in H $ and $\Im $ be the family of all neighbourhood $U \subseteq K$ of  the identity. Then $\Im $ is a directed set with the pre-order inclusion. Let  $\xi_U = \frac{\chi_U}{\mu(U)}$ for all $U \in \Im $. Then $ \exists  c>0$ such that
\begin{align*}
\|\xi_U\|_{p(\cdot)} &\leq c \|\xi_U\|_{p^+}\\
&= c \|\chi_U/\mu(U)\|^{p^+} \\
&= c \left(\int_G |\frac{\chi_U}{\mu(U)}|^{p+}\right)^{1/p^+}\\
&= c \left(\int_U (1/\mu(U))^{p^+}d\mu\right)^{1/p^+}\\
&= c \left(\mu(U)\right)^{\frac{1-p^+}{p^+}}\\
\Rightarrow \|\xi_U\|_{p(\cdot)} & < \infty .
\end{align*} 
Thus $\rho_p\left(\xi_U\right) \leq \|\xi_U\|_{p(\cdot)} < \infty $ i.e., $\xi_U \in L^{p(\cdot)}(G)$ and $(\xi_U)_{U \in \Im}$ is a net in $L^{p(\cdot)}(G)$.\\
Now, given $f \in L^{p(\cdot)}(G)$ and $\epsilon >0$, by the Lemma $\ref{L2}$, we get  a neighbourhood $U \in \Im$ such that $\|L_tf-f\|_{p(\cdot)} < \epsilon $ for all $t \in U$. Then for all $V \geq U$ with $V \in \Im$, we get have using Fibuni's theorem that 
\begin{align*}
\rho_p\left(\frac{\chi_V \ast f -f}{\epsilon}\right) &= \frac{1}{\epsilon}\left(\int_{F_p}\left|\int_V \frac{f(t^{-1}x)-f(x)}{\mu(V)}d\mu(t)\right|^{p(x)}d\mu(x)\right.\\& \left.+ \operatorname{ess}\sup\limits_{x \in F_p^c}\left|\int_V \frac{f(t^{-1}x)-f(x)}{\mu(V)}d\mu(t)\right|\right)\\
& \leq \frac{1}{\epsilon}\frac{1}{\mu(V)}\left\{\int_{F_p}\int_V \left|f(t^{-1}x)-f(x)\right|^{p(x)} d\mu(t)d\mu(x)\right.\\&
 \left.+  \operatorname{ess}\sup\limits_{x\in F_p^c} \left(\int_V \left|f(t^{-1}x)-f(x)\right|d\mu(t)\right)  \right\}\\
&= \frac{1}{\epsilon}\frac{1}{\mu(V)}\left\{\int_V\left(\int_{F_p}\left|L_tf-f\right|^{p(x)}d\mu(x)+\operatorname{ess}\sup\limits_{x\in F_p^c}\left|L_tf-f\right|\right)d\mu(t)\right\}\\
&= \frac{1}{\epsilon}\frac{1}{\mu(V)}\int_V \rho_p\left(L_tf-f\right)d\mu(t)\\
&= \frac{1}{\epsilon}\rho_p\left(L_tf-f\right).
\end{align*} 
Since $\rho_p\left(L_tf-f\right)\leq \|L_tf-f\|_{p(\cdot)} < \epsilon$, so we get $\rho_p\left(\frac{\xi_V \ast f-f}{\epsilon}\right)\leq 1$. Thus by the definition of the norm $\|\cdot\|_{p(\cdot)}$, we get $\|\xi_V \ast f -f\|_{p(\cdot)} < \epsilon $. Hence $\{\xi_V\}_{V\in \Im}$ is a left approximate identity in $\mathcal{L}^{p(\cdot)}(H)$.
\end{proof}
 In the similar approach we can find right approximate identity in $\mathcal{L}^{p(\cdot)}(H)$.\\
Now we shall discuss about the condition under which the Banach algebra  $\mathcal{L}^{p(\cdot)}(H)$ has an identity.
\begin{thm}
For a bounded exponent $p$, the variable Lebesgue algebra $\mathcal{L}^{p(\cdot)}(H)$ contains an identity iff $H$ is discrete. 
\end{thm}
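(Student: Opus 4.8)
The plan is to prove the two implications separately, exploiting the structure of $\mathcal{L}^{p(\cdot)}(H)$ as a convolution algebra. For the easy direction, suppose $H$ is discrete. Then a left Haar measure is (a constant multiple of) counting measure, so every singleton has positive mass; normalizing, $\mu(\{x\})=1$ for all $x\in H$. The function $\delta_e=\chi_{\{e\}}$ then satisfies $\rho_p(\delta_e)\le 1$ (since $|\delta_e(x)|^{p(x)}$ is $1$ at $x=e$ and $0$ elsewhere, when $e\in F_p$; if $e\notin F_p$ then $\operatorname{ess}\sup$ picks up $1$), hence $\delta_e\in\mathcal{L}^{p(\cdot)}(H)$ with $\|\delta_e\|_{p(\cdot)}\le 1$. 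A direct computation with the convolution \eqref{C} gives, for any $f\in\mathcal{L}^{p(\cdot)}(H)$,
\[
(\delta_e\star f)(x)=\sum_{y\in H}\delta_e(y)f(y^{-1}x)=f(x),
\]
so $\delta_e$ is a (two-sided, in fact) identity. I would include the symmetric one-line check that $f\star\delta_e=f$.

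For the converse, suppose $\mathcal{L}^{p(\cdot)}(H)$ has an identity $u$, i.e. $u\star f=f$ for all $f\in\mathcal{L}^{p(\cdot)}(H)$, and I want to force $H$ to be discrete. The standard strategy for $L^1$-type algebras is: if $H$ were non-discrete, the identity $e$ would have no atom, so one could find neighbourhoods $U$ of $e$ of arbitrarily small Haar measure, and then the approximate-identity elements $\xi_U=\chi_U/\mu(U)$ constructed in Theorem~\ref{T1} would have to satisfy $u\star\xi_U=\xi_U$, while on the other hand $\xi_U$ converges to nothing in norm (its norm blows up, or it leaves every compact set of the algebra) — contradiction. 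Concretely, I would argue: for the identity $u$ and any $f$, continuity of convolution plus $u\star\xi_V\to u$ (left approximate identity applied to $u$) gives $\xi_V\to u$ in $\mathcal{L}^{p(\cdot)}(H)$; but if $H$ is non-discrete the computation $\|\xi_U\|_{p(\cdot)}\le c\,(\mu(U))^{(1-p^+)/p^+}$ from Theorem~\ref{T1}, read in the other direction via the lower bound in \eqref{e3}, forces $\|\xi_U\|_{p(\cdot)}\to\infty$ as $\mu(U)\to 0$, so $(\xi_U)$ cannot converge. Hence $H$ must have an atom at $e$, and by homogeneity (left translation invariance of $\mu$, Lemma~1.2 of the excerpt) every point is an atom, which for a locally compact group means $H$ is discrete.

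An alternative, perhaps cleaner, converse avoids approximate identities: if $u$ is the identity, then for every $f$ we have $u\star f=f$ pointwise a.e.; testing against $f=\chi_U$ for small neighbourhoods $U$ of $e$ and using $\int_U u(y)\,d\mu(y)\to 0$ as $\mu(U)\to 0$ (since $u\in L^1(H)$, as $\mathcal{L}^{p(\cdot)}(H)\subseteq L^1(H)$ by the algebra hypothesis) yields $(u\star\chi_U)(e)=\int_U u(y)\,d\mu(y)\to 0$, whereas $\chi_U(e)=1$; this is only consistent if $\mu$ has an atom at $e$, and again homogeneity finishes it. I expect the main obstacle to be the technical care needed in the converse: handling the $\operatorname{ess}\sup$ part of $\rho_p$ when $F_p^c$ has positive measure, making the pointwise identity $u\star f=f$ rigorous (it holds a.e., and one must choose a point of approximate continuity or argue through the $L^1$-inclusion), and correctly invoking the fact that a locally compact group with one atom is discrete. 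I would isolate that last fact as a short remark, since it is where the topological-group hypothesis is actually used.
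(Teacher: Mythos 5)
Your forward direction (discrete $\Rightarrow$ identity) is exactly the paper's: under counting measure $\chi_{\{e\}}$ lies in the space and is a two-sided identity for the convolution \eqref{C}. The problem is in the converse, specifically in the step you lean on for the contradiction. From $u\star\xi_V=\xi_V$ and $\xi_V\star u\to u$ you do get $\xi_V\to u$ in $\|\cdot\|_{p(\cdot)}$, but the claimed contradiction --- that $\|\xi_U\|_{p(\cdot)}\to\infty$ as $\mu(U)\to 0$ --- does not follow. The lower bound in \eqref{e3} gives $\|\xi_U\|_{p(\cdot)}\ge c_1\|\xi_U\|_{p_-}=c_1\,\mu(U)^{\frac{1}{p_-}-1}$, which blows up only when $p_->1$; for $p_-=1$ (perfectly admissible, since exponents take values in $[1,\infty]$) this is the constant $c_1$, and indeed for $p\equiv 1$ one has $\|\xi_U\|_{1}=1$ for every $U$. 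Since $L^1(H)$ of a non-discrete group is precisely the classical case the theorem must cover, no argument based on norm blow-up of $\xi_U$ can work in general; the displayed estimate $\|\xi_U\|_{p(\cdot)}\le c\,\mu(U)^{(1-p^+)/p^+}$ from Theorem \ref{T1} is only an upper bound and gives nothing either.

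The repair --- and it is essentially what the paper does --- is to extract information about the \emph{support} of $u$ rather than its norm: each $\xi_V$ vanishes off $V$, and $\xi_V\to u$ in $\|\cdot\|_{p(\cdot)}$, hence in $L^1$ by the embedding $\mathcal{L}^{p(\cdot)}(H)\subseteq L^1(H)$ built into the algebra hypothesis; therefore $u=0$ a.e.\ outside every neighbourhood of $e$, so $\operatorname{supp}(u)\subseteq\{e\}$, and since $u\neq 0$ this forces $\mu(\{e\})>0$, whence $H$ is discrete (your closing remark about one atom implying discreteness is correct and is indeed worth isolating). The paper reaches the same point by producing $f$ supported in an arbitrary neighbourhood $U$ of $e$ with $\|f-h\|_{p(\cdot)}<\epsilon$ and estimating $\int_{H\setminus U}|h|^{p(x)}\,d\mu$. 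Your ``alternative, cleaner'' converse suffers from the defect you yourself flag: $u\star f=f$ holds only $\mu$-a.e., so evaluating at the single point $e$ is illegitimate, and you never carry out the repair (integrating against test sets, or choosing a point of approximate continuity); as written it cannot stand either.
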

\begin{proof}
Let $\mathcal{L}^{p(\cdot)}(H)$ be a variable Lebesgue algebra which contains an identity $h$. Then $f \ast h = h \ast f = f$ for all $f \in \mathcal{L}^{p(\cdot)}(H)$. Suppose a neighbourhood $U$ of the identity $ e \in H $ and $\epsilon > 0$. Since $C_c (H)$ is dense in $\mathcal{L}^{p(\cdot)}(H)$, so $\exists f \in C_c(H)$ such that $\operatorname{supp}(f) \subset U$ and $\|f \ast h - h \|_{p(\cdot)} < \epsilon $. Since $f \ast h =f$, we have $\|f-h\|_{p(\cdot)} < \epsilon$. \\  Now
\begin{align*}
\epsilon > \|f-h\|_{p(\cdot)} &\leq \rho_p(f-h)\\
&= \int_U \left|f-h \right|^{p(x)}d\mu + \int_ {H\setminus U}|f-h|^{p(x)}d\mu\\
&\geq \int_{H \setminus U}|h|^{p(x)}d\mu.
\end{align*}   
This implies that $|h(x)|^{p(x)}$ should be zero for all $x\in H \setminus U$. But $1 \leq p(x)< \infty,$  so $g(x)=0$ for all $x \in H \setminus U$. For any neighbourhood $U$ of the identity in $H$, we have $\operatorname{supp}(g)\subset \{e\}$ and $\mu (\{e\})> 0$, as if $\mu (\{e\})= 0$, then $h=0$ a.e. on $H$, which is a contradiction to fact that $h$ ia an the identity in $\mathcal{L}^{p(\cdot)}(H)$. Thus $H$ is discrete.

 Conversely, let $H$ be a  discrete group. Then $\mu$ is a counting measure on $H$. The characteristic function $\chi_e$ of  $\{e\}$ belongs to $\mathcal{L}^{p(\cdot)}(H)$ and we have
\begin{align*}
(\chi_e \ast f)(x)&= \int_H \chi_e(y)f(y^{-1}x)d\mu(y)\\
&= \sum_{x\in H}\chi_e(y)f(y^{-1}x)\\
&=f(x)
\end{align*}
 for all $f\in \mathcal{L}^{p(\cdot)}(H)$ and $x \in H $. Thus the function $\chi_e$ is an identity of the algebra.
\end{proof}
 Since $\mathcal{L}^{p(\cdot)}(H)$ be variable Lebesgue algebra, where $p$ is a bounded exponent. Then using the existence of of a left approximate identity in $\mathcal{L}^{p(\cdot)}(H)$, we perceive that the closed left ideal of $\mathcal{L}^{p(\cdot)}(H)$ is left translation invariant.
\begin{thm}
Suppose $p$ is a bounded exponent and $\mu$ is $\sigma$-finite. Let $\mathcal{L}^{p(\cdot)}(H)$ be a variable Lebesgue algebra and $M$ be a closed linear subspace of $\mathcal{L}^{p(\cdot)}(H)$. Then $M$ is a left ideal in $\mathcal{L}^{p(\cdot)}(H)$ iff $L_x(M) \subseteq M $ for all $x \in H$.
\end{thm}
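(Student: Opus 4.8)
The plan is to prove the two implications separately. For ``$M$ a left ideal $\Rightarrow$ $M$ left translation invariant'' I will use the left approximate identity produced in Theorem~\ref{T1}; for the converse I will represent a convolution by $f$ as a Banach-space valued integral of left translates of $f$ and use closedness of $M$ together with Hahn--Banach.

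Assume first that $M$ is a left ideal, and fix $f\in M$ and $x\in H$. Let $(\xi_U)_{U\in\Im}$ be the left approximate identity $\xi_U=\chi_U/\mu(U)$ of Theorem~\ref{T1}. The substitution $y=xw$ in the convolution integral, together with left invariance of $\mu$, gives the identity $(L_x\xi_U)\star f=L_x(\xi_U\star f)$ for every $U\in\Im$. Now $L_x\xi_U\in\mathcal{L}^{p(\cdot)}(H)$ because left translation preserves $\|\cdot\|_{p(\cdot)}$, so $(L_x\xi_U)\star f\in M$ since $M$ is a left ideal. On the other hand $\xi_U\star f\to f$ in $\mathcal{L}^{p(\cdot)}(H)$ and $L_x$ is an isometry of $\mathcal{L}^{p(\cdot)}(H)$, hence $(L_x\xi_U)\star f=L_x(\xi_U\star f)\to L_xf$; as $M$ is closed, $L_xf\in M$. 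Thus $L_x(M)\subseteq M$ for all $x\in H$.

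For the converse, suppose $L_x(M)\subseteq M$ for all $x\in H$ and fix $f\in M$; the goal is $g\star f\in M$ for every $g\in\mathcal{L}^{p(\cdot)}(H)$. The key observation is the identity $(g\star f)(z)=\int_H g(y)\,(L_yf)(z)\,d\mu(y)$, which exhibits $g\star f$ as the vector-valued integral $\int_H g(y)\,L_yf\,d\mu(y)$ in the Banach space $\mathcal{L}^{p(\cdot)}(H)$. This integral converges absolutely: the map $y\mapsto L_yf$ is continuous from $H$ into $\mathcal{L}^{p(\cdot)}(H)$ (Lemma~\ref{L2}, combined with $L_{yy'}=L_yL_{y'}$ and the fact that each $L_y$ is an isometry), and it is strongly measurable because $\mu$ is $\sigma$-finite (so, up to a null set, $g$ is supported on a $\sigma$-compact set, on which $y\mapsto L_yf$ has separable range); moreover $\int_H|g(y)|\,\|L_yf\|_{p(\cdot)}\,d\mu(y)=\|f\|_{p(\cdot)}\,\|g\|_1<\infty$, the finiteness of $\|g\|_1$ being exactly the embedding $\mathcal{L}^{p(\cdot)}(H)\subseteq L^1(H)$ that underlies the variable Lebesgue algebra structure, cf. Lemma~\ref{L1}. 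Since $L_yf\in M$ for every $y$ and $M$ is a closed linear subspace, any bounded linear functional annihilating $M$ also annihilates the integrand, hence, by linearity and continuity, annihilates $\int_H g(y)\,L_yf\,d\mu(y)$; by the Hahn--Banach theorem this integral lies in $M$, i.e. $g\star f\in M$. Therefore $M$ is a left ideal.

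I expect the only genuine difficulty to be in the converse, namely making the vector-valued integral representation of $g\star f$ rigorous --- concretely, the strong measurability of $y\mapsto g(y)L_yf$ and the identification of the pointwise integral with the $\mathcal{L}^{p(\cdot)}(H)$-valued one. If one prefers to avoid Bochner integration, the same conclusion can be reached by density: using continuity of $g\mapsto g\star f$ (from $\|g\star f\|_{p(\cdot)}\le\|g\|_1\|f\|_{p(\cdot)}$) and closedness of $M$, one reduces first to $g$ simple (the class $\mathcal S$ is dense since $p$ is bounded), then to $g=\chi_K$ with $K$ compact (inner regularity of $\mu$), and for such $g$ one approximates $\chi_K\star f=\int_K L_yf\,d\mu(y)$ in $\mathcal{L}^{p(\cdot)}(H)$ by finite sums $\sum_i\mu(E_i)\,L_{y_i}f$ over fine Borel partitions $K=\bigsqcup_i E_i$ with $y_i\in E_i$, using uniform continuity of $y\mapsto L_yf$ on $K$; each such sum lies in $M$, and closedness of $M$ concludes the argument.
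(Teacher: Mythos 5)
Your proof is correct, and the forward implication is essentially identical to the paper's: both use the approximate identity $(\xi_U)$ from Theorem~\ref{T1}, the identity $(L_x\xi_U)\star f=L_x(\xi_U\star f)$, the fact that $L_x$ is an isometry, and closedness of $M$. For the converse the underlying idea is also the same --- a functional annihilating $M$ must annihilate $g\star f$ because $g\star f$ is a superposition of the translates $L_yf\in M$, so Hahn--Banach forces $g\star f\in M$ --- but the technical execution differs. The paper argues by contradiction and justifies the interchange $\Psi(j\star i)=\int_H j(y)\Psi(L_yi)\,d\mu(y)$ concretely: it invokes the duality $\bigl(\mathcal{L}^{p(\cdot)}(H)\bigr)^*=\mathcal{L}^{p'(\cdot)}(H)$ (this is precisely where the hypotheses that $p$ is bounded and $\mu$ is $\sigma$-finite enter) to write $\Psi$ as integration against some $\varphi\in\mathcal{L}^{p'(\cdot)}(H)$, and then applies classical Fubini. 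You instead package $g\star f$ as a Bochner integral $\int_H g(y)L_yf\,d\mu(y)$ and use the general commutation of bounded functionals with vector-valued integrals. Your route buys generality --- it never needs the explicit description of the dual space, so $\sigma$-finiteness is used only for strong measurability --- at the price of having to verify strong measurability of $y\mapsto g(y)L_yf$ and the identification of the pointwise convolution with the vector-valued integral, which you correctly flag as the genuine technical point (and for which your Riemann-sum fallback is a sound, more elementary substitute). The paper's route is more concrete but leans on the duality theorem; note also that both arguments (yours via $\|g\|_1<\infty$, the paper's via Fubini) tacitly use the embedding $\mathcal{L}^{p(\cdot)}(H)\subseteq L^1(H)$ built into the algebra hypothesis, which you make explicit and the paper does not.
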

\begin{proof}
 Let $M \subseteq  \mathcal{L}^{p(\cdot)}(H)$ be a left ideal. For any $f \in M$ and $\epsilon >0$, using Theorem \ref{T1}, we get $\left( e_V\right)_{V \in \Im}$ is a left approximate identity in $\mathcal{L}^{p(\cdot)}(H)$, so that $\|e_V \ast f -f\|_{p(\cdot)} < \epsilon $. Moreover $\left(L_xe_V\right) \ast f \in M$, since $\mathcal{L}^{p(\cdot)}(H)$ is left translation invariant and $M$ is a left ideal. Thus $\|L_xe_V \ast f - L_xf \|_{p(\cdot)} = \|e_V \ast f - f\|_{p(\cdot)} < \epsilon $. Therefore $L_xf \in M$, since $M$ is closed.

Conversely, suppose that $M$ is a left translation invariant subspace of     $\mathcal{L}^{p(\cdot)}(H),$ i.e., $L_x(M) \subseteq M$ for all $x \in H$. To prove that $M$ is a left ideal. To prove this we need to show that $j \ast i \in M $ for all $ i \in M $ and $j\in \mathcal{L}^{p(\cdot)}(H) $. Suppose that there exists $i \in M$ and $j \in \mathcal{L}^{p(\cdot)}(H)
$ such that $j \ast i \notin  M $. Then by the consequence of Hahn-Banach theorem, $\exists$ a bounded linear  functional $\Psi$ on $\mathcal{L}^{p(\cdot)}(H))$ such that $\Psi(M) = \{0\}$ and $F(j \ast i)\neq 0$. Further, since $p$ is a bounded exponent and $\mu$ is $\sigma$-finite, the dual space of $\mathcal{L}^{p(\cdot)}(H)$ is $\mathcal{L}^{p'(\cdot)}(H)$, where $p'$ is the conjugate exponent of $p$. So, the  bounded linear functional $ \Psi \in \left(\mathcal{L}^{p(\cdot)}(H)\right)^* $ can be uniquely determine by $\varphi \in \mathcal{L}^{p'(\cdot)}(H) $, such as $$\Psi(\xi) = \int_H \xi\varphi  d\mu, ~\xi \in \mathcal{L}^{p(\cdot)}(H). $$
Therefore,
\begin{align*}
  \Psi(j \ast i) &= \int_H \varphi(x)(j \ast i)(x)d\mu(x) \\
& = \int_H \varphi(x)\left(\int_H j(y)i(y^{-1}x)d\mu(y)\right)d\mu(x)\\
&= \int_H j(y)\left(\int_H\varphi(x)L_yi(x)d\mu(x)\right)d\mu(y)\\
&= \int_H j(y)\Psi(L_yi)d\mu(y)\\
&=0.
\end{align*}
Since $L_yf \in M $ and $\Psi(M)=\{0\}$, which contradicts our assumption that $\Psi(j \ast i)\neq 0$. This completes the proof.
\end{proof} 

\begin{corollary}
If $M$ is a  subspace of $\mathcal{L}^{p(\cdot)}(H)$ which is closed, then $M$ a right ideal in $\mathcal{L}^{p(\cdot)}(H)$ iff $M \subseteq \mathcal{L}^{p(\cdot)}(H) $ is  right translation invariant.
\end{corollary}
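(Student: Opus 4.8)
The plan is to run the proof of the preceding theorem with ``left'' replaced by ``right'' throughout: the right translation $R_xf(y)=f(yx)$ in place of $L_x$, the right approximate identity supplied by the remark after Theorem \ref{T1} in place of the left one, and right ideals in place of left ideals. Two short preliminaries are needed. First, the right analogue of the translation lemma, $\|R_xf\|_{p(\cdot)}=\|f\|_{p(\cdot)}$ for all $f\in\mathcal{L}^{p(\cdot)}(H)$, proved exactly as its left counterpart. Second, the intertwining identity $R_x(f\ast g)=f\ast(R_xg)$, which follows immediately by unfolding both sides of (\ref{C}); observe that it is the right factor $g$ that is translated, which is precisely why right translations pair with right (rather than left) ideals.

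For the forward direction, let $M$ be a closed right ideal, fix $f\in M$, $x\in H$ and $\epsilon>0$, and choose $V\in\Im$ with $\|f\ast e_V-f\|_{p(\cdot)}<\epsilon$, where $(e_V)_{V\in\Im}$ is a right approximate identity. Since $R_xe_V\in\mathcal{L}^{p(\cdot)}(H)$ and $M$ is a right ideal, $f\ast(R_xe_V)\in M$, and the intertwining identity together with the isometry property of $R_x$ give
\[
\|f\ast(R_xe_V)-R_xf\|_{p(\cdot)}=\|R_x(f\ast e_V-f)\|_{p(\cdot)}=\|f\ast e_V-f\|_{p(\cdot)}<\epsilon .
\]
As $M$ is closed and $\epsilon$ was arbitrary, $R_xf\in M$, so $R_x(M)\subseteq M$ for every $x\in H$.

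For the converse, assume $R_x(M)\subseteq M$ for all $x$, and suppose toward a contradiction that $i\ast j\notin M$ for some $i\in M$ and $j\in\mathcal{L}^{p(\cdot)}(H)$. By the Hahn--Banach theorem there is a $\Psi\in(\mathcal{L}^{p(\cdot)}(H))^{*}$ with $\Psi(M)=\{0\}$ and $\Psi(i\ast j)\neq0$; since $p$ is bounded and $\mu$ is $\sigma$-finite, $\Psi(\xi)=\int_H\xi\varphi\,d\mu$ for a unique $\varphi\in\mathcal{L}^{p'(\cdot)}(H)$. I would then expand $\Psi(i\ast j)$, apply Fubini's theorem, and change variables $y\mapsto xt^{-1}$ in the inner integral to rewrite
\[
\Psi(i\ast j)=\int_H j(t)\,\Psi\!\left(R_{t^{-1}}i\right)d\mu(t) ,
\]
which vanishes because each $R_{t^{-1}}i$ lies in $M\subseteq\ker\Psi$; this contradicts $\Psi(i\ast j)\neq0$, so $M$ is a right ideal.

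The step I expect to require the most care is the change of variables $y\mapsto xt^{-1}$ that turns $i\ast j$ into a superposition of right translates of $i$: it is measure-preserving exactly when the Haar measure is both inversion-invariant and right-invariant, i.e. when $H$ is unimodular. This holds in the setting under consideration, since the existence of an approximate identity (Theorem \ref{T1}) forces $\mu(H)<\infty$, hence $H$ compact and $\mu$ bi-invariant; in a non-unimodular group one would instead have to carry the modular function through the computation, so it is cleanest to invoke the finiteness of $\mu$ (equivalently, unimodularity) explicitly at this point, just as the preceding results implicitly do.
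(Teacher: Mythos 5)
The paper offers no proof of this corollary at all, leaving it as the mirror image of the preceding left-ideal theorem, and your proposal is precisely that mirror image, correctly executed: the intertwining $R_x(f\ast g)=f\ast(R_xg)$ plays the role that $(L_xe_V)\ast f=L_x(e_V\ast f)$ plays in the forward direction of the paper's argument, and the Hahn--Banach/duality argument goes through with $\Psi(i\ast j)=\int_H j(t)\,\Psi(R_{t^{-1}}i)\,d\mu(t)$ replacing the paper's $\int_H j(y)\,\Psi(L_yi)\,d\mu(y)$. Your one substantive addition is the observation that the right-handed change of variables requires right-invariance (equivalently, inversion-invariance) of $\mu$, which the left-handed computation does not; this is a genuine asymmetry the paper silently skips, and your justification --- $\mu(H)<\infty$ forces $H$ compact, hence unimodular --- is correct and worth making explicit. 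The only caveat, which applies equally to the paper itself, is that the isometry $\|R_xf\|_{p(\cdot)}=\|f\|_{p(\cdot)}$ you invoke stands on the same unproved footing as the paper's Lemma 3.2 for $L_x$: for a genuinely variable exponent, translation does not preserve the modular unless $p(\cdot)$ itself is translation-invariant, so your proof inherits, but does not worsen, that gap.
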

\thebibliography{00}
\bibitem{Chon}D. L. Cohn  Measure theory. Vol. 1. Boston, Mass.: Birkh\"auser, 2013.
\bibitem{Uribe} U. D. Cruz. Variable Lebesgue spaces: Foundations and harmonic analysis. CRM Preprints, 2012.
\bibitem{Diening}L Diening , P Harjulehto, P H\"ast\"o, M Ruzicka . Lebesgue and Sobolev spaces with variable exponents. Springer, 2011.
\bibitem{EdmundsRakosnik} D. Edmunds and Ji\v{r}\'i  R\'akosn\'ik. Sobolev embedding with variable exponent, Studia Math. 143(3)(2000) 267--293.
\bibitem{Peter}N. P. Q. Hiep. On variable Lebesgue spaces. Kansas State University, 2011.

\bibitem{OJ}O. Kov\'{a}\v{c}ik and J. R\'{a}kosn\'{i}k, On Spaces $L^{p(x)}$ and $W^{k,p(x)}$, Czechoslovak Math. J. 41(4)(1991), 592--618.
\bibitem{AlenSerap} A. Osancliol,  S. \"{O}ztop. Weighted Orlicz algebras on locally compact groups, J. Aust. Math. Soc. 99(3)(2015) 399--414.
\bibitem{Saeki}S. Sadahiro.The $ L^{p} $-conjecture and Young's inequality, Illinois J. Math. 34(3)(1990) 614--627.
\bibitem{S. Samko} S. Samko. Convolution type operators in $L^{p(x)}$. Integral Transform Spec. Funct. 7(1-2)(1998) 123--144.
\bibitem{Sharapudinov1}I. I. Sharapudinov. The topology of the space $L^p(t)([0, 1])$. Mat. Zametki, 26(4)(1979) 613--632. 
\bibitem{Sharapudinov2}I. I. Sharapudinov. Approximation of functions in the metric of the space $L^p(t)([a, b])$ and quadrature formulas. In Constructive function theory '81 (Varna, 1981), pages 189-193. Publ. House Bulgar. Acad. Sci., Sofia, 1983.
\bibitem{Sharapudinov3}I. I. Sharapudinov. The basis property of the Haar system in the space $L^p(t)([0, 1])$ and the principle of localization in the mean. Mat. Sb. (N.S.), 130(172)(2)(1986) 275--283.
\bibitem{Sharapudinov4} I. I. Sharapudinov. On the uniform boundedness in $L^p (p = p(x))$ of some families of convolution operators. Mat. Zametki, 59(2)(1996) 205--212.
\bibitem{Tsenov}I. V. Tsenov. Generalization of the problem of best approximation of a function in the space
Ls. Uch. Zap. Dagestan. Gos. Univ. 7(1961) 25--37.



\bibitem{Zhikov1}V. V. Zhikov. Problems of convergence, duality, and averaging for a class of functionals of the calculus of variations. Dokl. Akad. Nauk SSSR, 267(3)(1982) 524--528. 
\bibitem{Zhikov2} V. V. Zhikov. Questions of convergence, duality and averaging for functionals of the calculus of variations. Izv. Akad. Nauk SSSR Ser. Mat. 47(5)(1983) 961--998.
\bibitem{Zhikov3} V. V. Zhikov. Averaging of functionals of the calculus of variations and elasticity theory.
Izv. Akad. Nauk SSSR Ser. Mat. 50(4)(1986) 675--710.
\bibitem{Zhikov4}V. V. Zhikov. The Lavrent'ev effect and averaging of nonlinear variational problems. Differentsial'nye Uravneniya, 27(1)(1991) 42--50.
\bibitem{Zhikov5}V. V. Zhikov. Passage to the limit in nonlinear  variational problems. Mat. Sb. 183(8)(1992) 47--84.
\bibitem{Zhikov6}V. V. Zhikov. On the homogenization of nonlinear variational problems in perforated domains.
Russian J. Math. Phys. 2(3)(1994) 393--408.
\end{document}